\newcommand{\By}[2]{\overset{\mbox{\tiny{#1}}}{#2}}
\newcommand{\ByRef}[2]{   \By{\eqref{#1}}{#2} }
\newcommand{\leByRef}[1]{ \ByRef{#1}{\le} }
\newcommand{\geByRef}[1]{ \ByRef{#1}{\ge} }
\def\bo{\boldsymbol{\omega}}  
\def\PP{{\mathbb P}}
\def\EE{{\mathbb E}}
\def\se{\subseteq}
\def\cH{{\mathcal{H}}}  
\def\cB{{\mathcal{B}}}
\def\cE{{\mathcal{E}}}
\DeclareMathOperator{\im}{\mathrm{Im}}
\renewcommand{\epsilon}{\varepsilon}
\newtheorem{theorem}{Theorem}[section]
\newtheorem{lemma}[theorem]{Lemma}
\newtheorem{proposition}[theorem]{Proposition}
\newtheorem{definition}[theorem]{Definition}
\newtheorem{observation}[theorem]{Observation}
\title{Bounds for Hypergraph Universality}
\author{Peter Allen \and Julia Böttcher \and Jasmin Katz }
\date{\today}
\begin{document}
\maketitle
\begin{abstract}

A graph $\Gamma$ is said to be \emph{universal} for a class of graphs $\cH$ if $\Gamma$ contains a copy of every $H \in \cH$ as a subgraph. The number of edges required for a host graph $\Gamma$ to be universal for the class of $D$-degenerate graphs on $n$ vertices has been shown to be $O(n^{2-1/D}(\log n)^{2/D}(\log\log n)^{5})$. We generalise this result to $r$-uniform hypergraphs, showing the following. Given $D, r \ge 2$ and $n$ sufficiently large, there exists a constant $C = C(D, r)$ such that there exists a graph with at most 
    $$Cn^{r-1/D}(\log n)^{2/D}(\log\log n)^{2r+1}$$ 
edges which is universal for the class of $D$-degenerate $r$-uniform hypergraphs on $n$ vertices. This is tight up to the polylogarithmic term.
\end{abstract}

\section{Introduction}

The quest for universal graphs and hypergraphs is a central topic in extremal combinatorics. A hypergraph $\Gamma$ is \emph{universal} for a class of hypergraphs $\cH$ if $\Gamma$ contains a copy of every $H \in \cH$ as a (not necessarily induced) subgraph. In this case we also say that $\Gamma$ is \emph{$\cH$-universal}. Given a class of hypergraphs $\cH$, it is natural to ask what is the minimum number of edges required for a hypergraph to be $\cH$-universal.

More is known about universality for graphs than hypergraphs. Universality has been well-studied for many classes of graphs, including trees \cite{Chung_Graham_1978, Chung_Graham_1983, Note_on_trees, kim2025} and planar graphs \cite{Babai, spanning_planar, Planar_graphs}. For the class of $n$-vertex graphs with maximum degree $\Delta$, denoted $\cH_{\Delta}(n)$, an explicit construction of a $\cH_{\Delta}(n)$-universal graph with $O(n^{2-2/\Delta})$ edges was given by Alon and Capalbo \cite{AlCapExact}. It can be shown via a counting argument that $\Omega(n^{2-2/\Delta})$ edges are required to be $\cH_{\Delta}(n)$-universal, and so this result is tight. Another widely studied class of graphs are degenerate graphs. A graph is \emph{$D$-degenerate} if there exists an ordering of the vertices $v_1, \dots, v_n$ such that every vertex $v_i$ has at most $D$ neighbours in the set $\{v_1, \dots v_{i}\}$. We denote the class of $D$-degenerate graphs by $\cH^{}(n, D)$. The existence of a $\cH^{}(n, D)$-universal graph with $O(n^{2-1/D}(\log^{2/D}n)(\log\log n)^5)$ was shown by Allen, Böttcher and Liebenau \cite{pja} for all $D > 1$, and this is tight up to the logarithmic term. In the case $D = 1$ (which corresponds to trees), Chung and Graham \cite{Chung_Graham_1983} gave a construction of a $\cH^{}(n, 1)$-universal graph on $O(n \log n)$ edges, which is tight up to the multiplicative constant~\cite{Chung_Graham_1978}. There have also been recent improvements in the multiplicative constant in the lower and upper bounds for this class of graphs~\cite{Note_on_trees, kim2025}. Alon,
Dodson, Jackson, McCarty, Nenadov, and Southern~\cite{alon2024universalitygraphsboundeddensity}
derived related results concerning the class of
graphs of bounded density, which is more general than the class of graph with bounded degeneracy.

In comparison with the graph case, relatively little is known about universality for hypergraphs. However, this problem has been recieving more attention in recent years. For instance, let $\mathcal{H}^{(r)}_{\Delta}(n)$ denote the class of $n$ vertex $r$-graphs with maximum degree $\Delta$, where an \emph{$r$-graph} is an $r$-uniform hypergraph. A counting argument shows that the number of edges required for an $r$-graph to be $\mathcal{H}^{(r)}_{\Delta}(n)$-universal is $\Omega(n^{r-r/\Delta})$. Hetterich, Parczyk and Person \cite{HetterichParczykPerson} gave constructions of $\mathcal{H}^{(r)}_{\Delta}(n)$-universal $r$-graphs with $O(n^{r-r/\Delta})$ edges in the case that $r$ is even, in the case that both $r$ is odd and $\Delta = 2$, and in the case $r | \Delta$; in all other cases they also provided constructions, albeit with a polynomial error term. Nenadov \cite{nenadov2024} recently improved on this, giving a construction of a graph with $\Theta(n^{r-r/\Delta}(\log^{r/\Delta}n))$ edges which is $\mathcal{H}^{(r)}_{\Delta}(n)$-universal for all $r$ and $\Delta$, thus reducing the error term to poly-logarithmic. 

In this paper we focus on universality for $D$-degenerate $r$-graphs. In analogy to the graph case, we say an $r$-graph $G$ is \emph{$D$-degenerate} if there exists an ordering of the vertices $v_1, \dots, v_n$ of ~$G$ such that every vertex $v_i$ has at most $D$ edges contained entirely in the set $\{v_1, \dots v_{i}\}$. Equivalently, every induced $r$-subgraph has a vertex of degree at most $D$. We denote the set of $D$-degenerate $r$-graphs on $n$ vertices with maximum degree $\Delta$ by $\mathcal{H}_{\Delta}^{(r)}(n, D)$, and remove the subscript if the maximum degree is unbounded. We also note that there exists other notions of degeneracy for hypergraphs, such as skeletal degeneracy, which has been shown to be useful for Turán- and Ramsey-type problems~\cite{fox2023ramseyturannumberssparse}. The \emph{skeletal degeneracy} of a hypergraph~$G$ is simply the degeneracy of the graph obtained from~$G$ by replacing each edge by a clique.
Observe that if an $r$-graph has skeletal degeneracy $D$, then it has degeneracy at most $D/(r-1)$. Hence, the degeneracy definition we work with here is more general.

Noting that a $D$-degenerate graph may have density arbitrarily close to $D$, Nenadov~\cite{nenadov2024} showed that there exists an $\mathcal{H}_{\Delta}^{(r)}(n, D)$-universal $r$-graph with at most $Cn^{r-1/D}\log^{1/D}n$ edges. We give a result for $D$-degenerate $r$-graphs of unbounded degree, generalising the result of Allen, Böttcher and Liebenau~\cite{pja}, which gives the same bound for $r=2$.

\begin{theorem}\label{thm:main}
    Given $D, r \ge 2$ and $n$ sufficiently large, there exists a constant $C = C(D, r)$ such that there exists a graph with at most 
    $$Cn^{r-1/D}(\log n)^{2/D}(\log\log n)^{2r+1}$$ edges which is $\cH^{(r)}(n, d)$-universal. 
\end{theorem}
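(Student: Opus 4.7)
The plan is to generalise the probabilistic framework of Allen, Böttcher and Liebenau~\cite{pja} from graphs to $r$-graphs. Construct a random host $r$-graph $\Gamma$ with an appropriate multi-part structure, fix any target $H\in \cH^{(r)}(n,D)$, and show via a greedy embedding plus concentration arguments that $\Gamma$ contains $H$ with probability high enough to beat a union bound over all $2^{O(Dn\log n)}$ such~$H$.

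First, I would preprocess $H$ by fixing a degeneracy ordering $v_1,\ldots,v_n$ and defining, for each vertex $v_i$, a forward-degree $f(v_i)$ counting the edges containing $v_i$ together with at least one later vertex. A straightforward counting argument using $D$-degeneracy yields $|\{v : f(v)\ge 2^j\}|\le Dn/2^j$. Group vertices into classes $U_0,\ldots,U_t$ with $t = O(\log n)$ where $U_j$ collects vertices of forward-degree in $[2^j,2^{j+1})$, then refine each $U_j$ further by an $O(\log\log n)$ factor to distinguish how the back-edges of each vertex are distributed across earlier classes. This hierarchical decomposition is what dictates the density profile required in~$\Gamma$.

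Next, build $\Gamma$ on vertex parts $W_0,\ldots,W_t$ of matching sizes. For each admissible edge-placement (a sequence of $r$ class-indices), include hyperedges independently with a probability that scales geometrically with the class-indices, tuned so that the total edge count is $O(n^{r-1/D}(\log n)^{2/D}(\log\log n)^{2r+1})$. Then embed $H$ into $\Gamma$ greedily, processing classes from the top $U_t$ down to $U_0$: when embedding $v\in U_j$, each of its at most $D$ back-edges has its other $r-1$ vertices already embedded, so $v$ must land in the intersection of the corresponding $\Gamma$-codegree sets. A matching/Hall-type argument within each class will let us embed all of $U_j$ simultaneously, provided these candidate sets are large and suitably distributed.

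The main obstacle is the embedding lemma itself: the candidate intersections must remain large throughout the embedding, with failure probability of order $\exp(-\omega(Dn\log n))$ so as to survive the union bound over all $D$-degenerate $r$-graphs. For $r=2$ this was handled via concentration of vertex codegrees~\cite{pja}; for general $r$ the analogous object is the joint incidence of $(r-1)$-tuples in the partial embedding, which requires substantially stronger concentration tools. I would use Janson-type inequalities applied inductively to appropriate sub-hypergraphs, or alternatively Kim--Vu polynomial concentration, to control these quantities, together with an exposure martingale for the greedy choices made during the embedding. Each additional ``level'' of hypergraph structure is expected to contribute $O(\log\log n)$ slack, roughly one factor from finer sub-bucketing of the $U_j$ and one from the concentration bound, which accounts for the jump from $(\log\log n)^5$ in the $r=2$ theorem to $(\log\log n)^{2r+1}$ in the statement. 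The matching lower bound $\Omega(n^{r-1/D})$ follows from a standard counting argument on the number of $D$-degenerate $r$-graphs.
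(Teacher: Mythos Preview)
Your plan diverges from the paper (and from~\cite{pja}) in one essential respect. You propose, for each fixed $H$, to show that the random $\Gamma$ contains $H$ with failure probability $\exp(-\omega(Dn\log n))$ and then union bound over all $\approx n^{(r-1)Dn}$ degenerate $r$-graphs. The paper does \emph{not} do this. It isolates a single deterministic ``pseudorandom'' condition on $\Gamma$ (Lemma~\ref{lem:pseudorandom}): for every \emph{well-behaved multi-set} $\cB$ of $(r-1)$-sets---an abstraction of a possible collection of embedded back-links---a positive fraction of each sub-block has link containing some $B\in\cB$. This is established a.a.s.\ by a union bound over well-behaved multi-sets, of which there are at most $n^{O(rtD)}$ of size $t$, against a failure probability of $n^{-\Omega(t(\log\log n)^D)}$; the union bound closes for each $t$ separately. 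Once this deterministic property holds, the greedy embedding of \emph{every} $H$ succeeds with no further randomness. Your route of driving the per-$H$ failure down to $\exp(-\omega(n\log n))$ is not obviously feasible at this edge density: when embedding a single vertex with $D$ back-edges early on, the candidate set is far too small to yield a tail of that strength, and the vague appeal to Janson/Kim--Vu plus an exposure martingale does not explain how this is overcome.

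Two further structural mismatches. First, the paper uses only $N=\Theta(\log\log n)$ blocks with thresholds $\Delta_k=n^{D^{-k}}$ (successive $D$-th roots, not doubling), each refined into $\log n$ sub-blocks; your $O(\log n)$ doubling classes do not produce the stated $(\log\log n)^{2r+1}$ exponent. Second, the concentration machinery needed is much lighter than you suggest: a second-moment (Chebyshev) estimate for a single vertex (Lemma~\ref{lem:cand_for_some}) followed by Chernoff over the sub-block suffices, because the events ``$\exists B\in\cB:\ B\subseteq L(u)$'' are independent over $u\notin V(\cB)$.
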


Theorem \ref{thm:main} is tight up to the polylogarithmic factor, as shown by the following result. In fact, we prove the following slightly stronger statement: if $\Gamma$ contains a copy of every connected $n$-vertex $D$-degenerate graph with maximum degree bounded by $rD+1$, then $e(\Gamma)\ge\tfrac{1}{100r^2D}n^{r-1/D}$. 

\begin{theorem}\label{thm:lower_bd}
    Suppose $r \ge 2$, $D \ge 1$ and let $n$ be sufficiently large. Suppose $\Gamma$ is a $\cH^{(r)}_{rD+1}(n, D)$-universal $r$-graph. Then $e(\Gamma) \ge \frac{1}{100r^{2}D}n^{r-1/D}$.
\end{theorem}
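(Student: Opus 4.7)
We proceed by contradiction: assume $\Gamma$ is $\cH^{(r)}_{rD+1}(n,D)$-universal but $m := e(\Gamma) < \frac{1}{100 r^2 D} n^{r-1/D}$. Discarding isolated vertices of $\Gamma$ (which cannot help contain any connected $n$-vertex graph), we may assume $N := |V(\Gamma)|$ satisfies $N \le rm$, so $N$ is polynomially bounded in $n$. The goal is to exhibit a single connected $D$-degenerate $n$-vertex $r$-graph $H$ with $\Delta(H) \le rD+1$ that fails to embed in $\Gamma$.

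The main tool is a Kővári--Sós--Turán-style codegree count for $r$-graphs. Writing $d_\Gamma(S)$ for the codegree in $\Gamma$ of an $(r-1)$-subset $S\subseteq V(\Gamma)$, double-counting yields $\sum_{S \in \binom{V(\Gamma)}{r-1}} d_\Gamma(S) = rm$, and Markov's inequality then bounds the number of $(r-1)$-subsets of codegree at least $t$ by $rm/t$, which under our hypothesis is strictly smaller than $\frac{1}{100 r^2 D t}n^{r-1/D}$. The plan is to design $H$ so that any embedding $H \hookrightarrow \Gamma$ forces significantly more $(r-1)$-subsets than this count allows to have large codegree, producing the contradiction.

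Concretely, the witness $H$ is built from many overlapping copies of the block $B := K^{(r)}_{1,\ldots,1,D,r}$ (which has $Dr$ edges, $2r+D-2$ vertices, degeneracy $D$ after ordering the degree-$r$ part last, and maximum degree $rD$), glued along their size-$D$ and size-$r$ parts up to the multiplicities the degree ceiling $rD+1$ permits (namely $D$ blocks per size-$D$ vertex and $r$ blocks per size-$r$ vertex), and then padded to $n$ vertices with a loose $r$-path. Iterating the overlap in $D$ ``layers'' so that each block contributes codegree-$D$ extensions in multiple directions, the resulting core $H_0$ is engineered so that an embedding $H_0 \hookrightarrow \Gamma$ forces $\Omega(n^{r-1/D})$ distinct $(r-1)$-subsets of $V(\Gamma)$ to have codegree at least $D$. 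Verifying that the resulting $H$ is connected, $D$-degenerate, and of maximum degree at most $rD+1$ reduces to routine bookkeeping, since the degree ceiling is calibrated precisely to host a single block plus its share of the path-padding.

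The principal obstacle is the iterated-block construction: one must show that the $D$-layer nested structure genuinely amplifies the number of rich codegree subsets beyond the naive linear count that a disjoint-block approach yields, reaching the full exponent $r-1/D$. This amounts to a careful hypergraph packing argument in which each of the $D$ layers contributes a factor of $n^{1/D}$ to the rich-subset count via its sharing multiplicities; the max-degree ceiling $rD+1$ is exactly what allows this layered amplification while still housing the core block structure. Once the construction is in place, the contradiction is immediate from the Markov-type bound above, and the explicit constant $\tfrac{1}{100r^2D}$ follows by chasing the numerical factors through this count together with the bound $N\le rm$.
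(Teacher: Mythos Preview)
Your proposal has a genuine and fatal gap: the single-witness codegree approach cannot reach the exponent $r-1/D$ when $r-1/D>1$. Any $H\in\cH^{(r)}_{rD+1}(n,D)$ has at most $Dn$ edges, so by the very double-counting identity you use, $\sum_{S\in\binom{V(H)}{r-1}} d_H(S)=r\,e(H)\le rDn$. Hence $H$ contains at most $rn$ $(r-1)$-sets of codegree $\ge D$, and an embedding $\phi:H\hookrightarrow\Gamma$ can certify $d_\Gamma(\phi(S))\ge D$ only for these sets. Thus the number of $(r-1)$-subsets of $V(\Gamma)$ that any embedding can \emph{force} to have codegree $\ge D$ is at most $rn$, not $\Omega(n^{r-1/D})$. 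Since $rn=o(n^{r-1/D})$ whenever $(r,D)\neq(2,1)$, your Markov comparison never yields a contradiction, and no amount of ``layered amplification'' can change this: the obstruction is an upper bound on what a single $n$-vertex, $Dn$-edge hypergraph can encode, regardless of its internal structure. (Equivalently, the image of $\phi$ has only $n$ vertices, hence at most $\binom{n}{r-1}=O(n^{r-1})$ candidate $(r-1)$-subsets in total, which is already $o(n^{r-1/D})$ for $D\ge 2$.) The proposal also never actually specifies the ``iterated-block'' construction or proves its claimed properties, but that is secondary to the counting obstruction.

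The paper's argument is entirely different and does not attempt to exhibit a single witness. It lower-bounds the number $s$ of \emph{labelled} connected graphs in $\cH^{(r)}_{rD+1}(n,D)$ admitting $1,\dots,n$ as a degeneracy order: at each step $i$ one has at least $\tfrac{1}{rD+1}(i-1)$ vertices of degree $\le rD$ available among $\{1,\dots,i-1\}$, hence at least $\binom{\Omega(i^{r-1})}{D}$ choices for the back-link, giving $s\ge c^{-Dn}n^{(r-1)Dn}$. It then upper-bounds $s$ by the number of labelled $n$-vertex subgraphs of $\Gamma$ with at most $Dn$ edges, namely $\sum_{q\le Dn}\binom{e(\Gamma)}{q}n!\le c^{Dn}(Dn)^{-Dn}e(\Gamma)^{Dn}n^n$. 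Comparing the two bounds and taking $Dn$-th roots yields $e(\Gamma)\ge \tfrac{1}{100r^2D}n^{r-1/D}$. The key point is that the exponent comes from counting the \emph{entire family} of graphs, not from structural constraints imposed by any one of them.
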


\begin{subsection}{Related Results}
Instead of solely asking for the minimum number of edges required to be universal for some class of hypergraphs, if $\cH(n)$ represents a class of hypergraphs on $n$ vertices, we can further ask what is the minimum number of edges required for an $\cH(n)$-universal hypergraph on $n$ vertices -- such a graph is called \emph{spanning-universal}. In general, this question is more difficult to answer, and other than the results for trees~\cite{Chung_Graham_1983, Note_on_trees, kim2025} and planar graphs \cite{ spanning_planar}, none of the results stated above are for spanning-universal $r$-graphs. In fact, the best upper bound for the number of edges in a $\cH^{(r)}_{\Delta}(n)$ spanning-universal $r$-graph is $O(r^{r-2/\Delta}\ln^{4/\Delta}n)$ for $r = 2$~\cite{AlonCapalbo2007} and $O(n^{r-r/(2 \Delta)} (\log^{r/(2\Delta)}n))$ for $r \ge 3$~\cite{PersonParczyk2016}. Analogous results for degenrate graphs are not known.

Another active area of research is determining the threshold probability for a random $r$-graph $H^{(r)}(n, p)$ to be universal for $\mathcal{H}^{(r)}_{\Delta}(n)$. The \emph{random graph $H^{(r)}(n, p)$} is the probability space of all labeled $r$-graphs on $n$ vertices, where each set of $r$ vertices is chosen as an edge independently with probability $p$. For $r=2$, Alon and Capalbo \cite{AlonCapalboRandom} showed that for every $\epsilon>0$ the random graph $H^{(2)}(n, p)$ is asymptotically almost surely (a.a.s.) $\mathcal{H}^{(2)}_{\Delta}((1-\epsilon )n)$-universal for $p  = O((\log n/n)^{1/\Delta})$. This result was generalised to hypergraphs by Parczyk and Person \cite{PersonParczyk2016}, who also strengthened it to a spanning-universal result when they showed that for $p = O((\log n/n)^{1/\Delta})$ the random $r$-graph $H^{(r)}(n, p)$ is $\mathcal{H}^{(r)}_{\Delta}(n)$-universal. This is the probability at which we expect every set of $\Delta$ vertices to have many common neighbours, a useful property in proving universality. However, this bound is not tight. In the $r=2$ case, Ferber and Nenadov \cite{FerberNenadov2018} have shown that $H^{(2)}(n, p)$ is $\mathcal{H}^{(2)}_{\Delta}(n)$-universal for $p = O(n^{-1/(\Delta - 0.5)} \log^3n)$, which is the current best known upper bound. The current lower bounds in the $r=2$ case are $p = \Omega(n^{2/(\Delta + 1)})$ for non-spanning universality~\cite{Conlon2017}, and $p =\Omega(n^{2/(\Delta + 1)}\log^{2/(\Delta({\Delta+1}))}n)$ for spanning universality~\cite{JKV}. 

When $D \ll \Delta$, it is possible to find tighter bounds on the values of $p$ for which the random $r$-graph is universal for the class $\mathcal{H}_{\Delta}^{(r)}(n, D)$ than for $\mathcal{H}_{\Delta}^{(r)}(n)$ alone. This has mostly been studied in the $r=2$ case, for which Ferber and Nenadov \cite{FerberNenadov2018} showed that for $p \ge (\log^3n/n)^{1/2D}$, the graph $H^{(2)}(n, p)$ is $\mathcal{H}_{\Delta}^{(2)}(n, D)$-universal. Note that the maximum degree condition is necessary -- it is not sensible to study the threshold probability for $\mathcal{H}^{(r)}(n, D)$-universality, as when we remove the requirement for bounded degree, the class $\mathcal{H}^{(r)}(n, D)$ may contain graphs with an $(r-1)$-set in $n-r+1$ edges. For any reasonable $p$, the random $r$-graph does not contain any such set of vertices.

\end{subsection}

\section{Preliminaries}
All logarithms are base 2, unless the base is specified. For a set $S$, we let $\binom{S}{k}$ denote the set of $k$-element subsets of $S$. In this notation, an $r$-graph $H = (V, E)$ is a hypergraph where each edge $s \in E$ is an element of $\binom{V}{r}$. 
Let $H = (V, E)$ be an $r$-graph. Given an $r$-graph $H$ and subsets $X_1,\dots,X_r$ of its vertices, we write $E(X_1,\dots,X_r)$ for the set of edges of $H$ with one vertex in each of $X_1,\dots,X_r$.

%To simplify notation, given any $k \in \NN$ and any subset $S \se {V \choose k}$, we let $V(S) = \bigcup S$ denote the set of vertices contained in $S$. 
For some $v \in V(H)$, we denote the \emph{link} of $v$ by $L(v) = \{X \in \binom{V}{r-1} : X \cup v \in E(H)\}$. Suppose we have an ordering of vertices $v_1, \dots, v_n$ of an $r$-graph $H$. Denote by $L^-(v_i) = L(v_i)\cap {\binom{\{v_1,  \dots, v_{i-1}\} }{r-1}}$ the \emph{back-link} of~$v_i$, that is, the subset of the link of $v_i$ consisting of sets containing only vertices that precede $v_i$ in our ordering of $V(H)$. 

By definition, for any $D$-degenerate $r$-graph there exists a \emph{$D$-degenerate ordering} of the vertices $v_1, \dots, v_n$ such that for each $i \in [n]$ we have $|L^-(v_i) |  \le D$. Hence, for any $D$-degenerate $r$-graph $H$ on $n$ vertices, it is clear that $H$ has less than $Dn$ edges. This implies the following observation. 

\begin{observation}\label{obs:vtxdeg}
    Let $H$ be a $D$-degenerate $r$-graph, and let $X$ be the set of vertices in $H$ of degree at least $k$. We have $\frac{|X| k}{r} \le |E(H)| \le Dn$. Hence $|X| \le \frac{rDn}{k}$. 
\end{observation}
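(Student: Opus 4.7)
The plan is to derive the two inequalities separately and then combine them.

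First I would prove the upper bound $|E(H)| \le Dn$ by fixing a $D$-degenerate ordering $v_1, \dots, v_n$ of $V(H)$. Every edge $e \in E(H)$ has a unique vertex of highest index in this ordering, say $v_i$, in which case $e \setminus \{v_i\} \in L^-(v_i)$. This gives a bijection between $E(H)$ and $\bigsqcup_i L^-(v_i)$, so
\[
|E(H)| \;=\; \sum_{i=1}^{n} |L^-(v_i)| \;\le\; Dn,
\]
using the defining property of the $D$-degenerate ordering.

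Next I would prove the lower bound $\tfrac{|X|k}{r} \le |E(H)|$ by a standard double count of incidences. Summing the degrees of the vertices in $X$,
\[
k \cdot |X| \;\le\; \sum_{v \in X} \deg(v) \;\le\; \sum_{v \in V(H)} \deg(v) \;=\; r \cdot |E(H)|,
\]
where the last equality holds because each edge contributes exactly $r$ to the sum of degrees.

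Finally, chaining the two bounds yields $k|X| \le r|E(H)| \le rDn$, hence $|X| \le rDn/k$. There is no real obstacle here; the only point worth noting is the careful interpretation of $L^-(v_i)$ as sets in $\binom{V}{r-1}$ so that the back-link correctly enumerates edges whose top vertex is $v_i$.
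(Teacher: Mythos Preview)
Your proposal is correct and matches the paper's approach: the paper states this as an observation without a separate proof, relying on exactly the degenerate-ordering edge count and the incidence double count that you spell out.
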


We will use the following basic inequality.

\begin{theorem}[Stirling's Approximation]
For integers $n,k$ and $e$ Euler's number, 
\begin{equation}\label{eq:stirling_v1}
    \lim_{n \to \infty} \frac{n!}{{\sqrt{2 \pi n}}(n/e)^n} =1,
\end{equation}
which implies
    \begin{equation}\label{eq:stirling}
    \left(\frac{n}{k} \right)^k \le \binom{n}{k} \le \left(\frac{en}{k} \right)^k.
\end{equation}
\end{theorem}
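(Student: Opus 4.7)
The plan is to separate the statement into two parts. The inequalities in \eqref{eq:stirling} can be proved by short elementary arguments, independent of the asymptotic in \eqref{eq:stirling_v1}, so I would dispatch them first. The asymptotic itself is the heart of the theorem and requires more work.

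For the inequalities, I would write
\[ \binom{n}{k} = \prod_{i=0}^{k-1} \frac{n-i}{k-i}. \]
For each $i \in \{0, 1, \dots, k-1\}$, the inequality $(n-i)/(k-i) \ge n/k$ is equivalent, after cross-multiplying, to $(n-k)i \ge 0$, which holds since $n \ge k$. Taking the product of these $k$ factorwise bounds yields $\binom{n}{k} \ge (n/k)^k$. For the upper bound, combine $\binom{n}{k} \le n^k/k!$ with the Taylor expansion $e^k = \sum_{j \ge 0} k^j/j! \ge k^k/k!$, which rearranges to $k! \ge (k/e)^k$; together these give $\binom{n}{k} \le (en/k)^k$.

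For the asymptotic I would proceed in two phases. First, I would establish the existence of a positive limit $C := \lim_n n!/\bigl((n/e)^n \sqrt{n}\bigr)$. Set $a_n := \log(n!) - (n + \tfrac12)\log n + n$. The difference $a_n - a_{n+1}$ can be expressed explicitly in terms of $\log\bigl((n+1)/n\bigr)$, and a short calculation (comparing the trapezoidal approximation $\tfrac12(\log n + \log(n+1))$ to the exact integral $\int_n^{n+1} \log x\, dx$ via concavity of $\log$) shows that $a_n - a_{n+1} > 0$ and that $\sum_n (a_n - a_{n+1})$ converges. Hence $(a_n)$ is decreasing and bounded below, so it converges to some real number $\log C$; exponentiating yields the claimed asymptotic with some constant $C > 0$.

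Second, I would identify $C = \sqrt{2\pi}$ using the Wallis product $\prod_{k=1}^n \frac{(2k)^2}{(2k-1)(2k+1)} \to \pi/2$. Rewriting this product in terms of the central binomial coefficient $\binom{2n}{n} = (2n)!/(n!)^2$ and substituting the (already established) asymptotic $n! \sim C\sqrt{n}(n/e)^n$ leads, after cancellation, to an identity of the form $C^2/2 = \pi$, from which $C = \sqrt{2\pi}$ follows. The main obstacle is precisely this last step: monotonicity and convergence follow from elementary calculus, but pinning down the exact constant $\sqrt{2\pi}$ requires an external input such as the Wallis product, the Gaussian integral, or a saddle-point analysis of the gamma function.
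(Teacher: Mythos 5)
Your proposal is correct. Note that the paper offers no proof of this theorem at all -- it is quoted as a classical fact and only the inequalities \eqref{eq:stirling} are actually used (in the proofs of Theorem \ref{thm:lower_bd} and Lemma \ref{lem:pseudorandom}) -- so there is nothing to compare against beyond standard arguments, and yours are the standard ones. Your treatment of \eqref{eq:stirling} is sound: the factorwise bound $(n-i)/(k-i)\ge n/k$ for $0\le i\le k-1$ (using $n\ge k$, which should be stated as a hypothesis) gives the lower bound, and $\binom{n}{k}\le n^k/k!$ combined with $k!\ge (k/e)^k$ gives the upper bound; in fact this proves \eqref{eq:stirling} directly and elementarily, without any appeal to \eqref{eq:stirling_v1}, which is cleaner than the paper's ``which implies'' phrasing (the limit statement alone would not yield a bound valid for all $n,k$). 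For \eqref{eq:stirling_v1}, your two-phase plan is the classical one and the key identity you need is exactly
\begin{equation*}
a_n-a_{n+1}=\Bigl(n+\tfrac12\Bigr)\log\frac{n+1}{n}-1=\int_n^{n+1}\log x\,dx-\tfrac12\bigl(\log n+\log(n+1)\bigr),
\end{equation*}
so concavity of $\log$ gives positivity, and the trapezoid error bound (or the expansion of $\log(1+1/n)$) gives $a_n-a_{n+1}=O(n^{-2})$, hence summability and convergence of $a_n$ to some $\log C$ with $C>0$; the Wallis-product computation then correctly pins down $C=\sqrt{2\pi}$, with Wallis itself being the unavoidable external classical input you already flag.
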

We also use the following concentration inequality.

\begin{theorem}[Chernoff bound \cite{Chernoff_one_sided}] \label{chernoff}
    Let $X = \sum_{i = 1}^nX_i$, where the $X_i$ are independent Bernoulli random variables. Then
    \begin{enumerate}[(i)]
        \item $\PP\big(X \ge (1 + \delta) \EE X\big) \le \exp\big(-\tfrac{\delta^2}{2 + \delta}\EE X\big)$ for all $\delta > 0$,\label{chernoff_upper}
        \item $\PP \big(X \le (1-\delta)\EE X\big) \le \exp\big(-\tfrac{\delta^2}{2}\EE X\big)$ for all $0 < \delta < 1$.\label{chernoff_lower}
    \end{enumerate}
    %Then for $\delta \in (0, 3/2)$, we have $\PP(|X - \EE X| > \delta \EE X) < 2 \exp(-\delta^2 \EE X/3)$.
\end{theorem}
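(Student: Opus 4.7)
The plan is to prove both tail bounds by the standard Chernoff technique: apply Markov's inequality to an exponentiated version of $X$ and optimise over a free parameter. The probabilistic ingredient is a uniform bound on the moment generating function. For any $t \in \RR$, independence gives
\[
\EE[e^{tX}] = \prod_{i=1}^n \EE[e^{tX_i}] = \prod_{i=1}^n \bigl(1 + p_i(e^t - 1)\bigr) \le \exp\bigl((e^t-1)\mu\bigr),
\]
where $p_i = \PP(X_i = 1)$, $\mu = \EE X = \sum_i p_i$, and I use $1 + x \le e^x$ with $x = p_i(e^t-1)$ (which is valid for $t$ of either sign). This is the only place independence enters.

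For part~(i), I would apply Markov to $e^{tX}$ with $t > 0$, obtaining $\PP(X \ge (1+\delta)\mu) \le e^{-t(1+\delta)\mu}\EE[e^{tX}]$, combine with the MGF bound, and optimise in $t$. The minimiser is $t = \ln(1+\delta)$, producing
\[
\PP\bigl(X \ge (1+\delta)\mu\bigr) \le \exp\Bigl(-\mu\bigl((1+\delta)\ln(1+\delta) - \delta\bigr)\Bigr).
\]
For part~(ii), the symmetric argument applied to $e^{-tX}$ with $t > 0$ and optimiser $t = -\ln(1-\delta) > 0$ yields
\[
\PP\bigl(X \le (1-\delta)\mu\bigr) \le \exp\Bigl(-\mu\bigl(\delta + (1-\delta)\ln(1-\delta)\bigr)\Bigr).
\]
To finish, it suffices to verify the two real-variable inequalities
\[
(1+\delta)\ln(1+\delta) - \delta \ge \frac{\delta^2}{2+\delta} \quad (\delta > 0),
\qquad
\delta + (1-\delta)\ln(1-\delta) \ge \frac{\delta^2}{2} \quad (0 < \delta < 1).
\]

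The main obstacle is purely analytic rather than probabilistic, and both inequalities are elementary calculus facts. The first rearranges to $\ln(1+\delta) \ge 2\delta/(2+\delta)$; setting $h(\delta)$ equal to the difference gives $h(0) = 0$ and derivative $h'(\delta) = \delta^2 / \bigl((1+\delta)(2+\delta)^2\bigr) \ge 0$, so $h \ge 0$ on $[0,\infty)$. The second follows from the Taylor series $-\ln(1-\delta) = \sum_{k \ge 1}\delta^k/k$: the difference vanishes at $\delta = 0$ and has derivative $-\ln(1-\delta) - \delta = \sum_{k \ge 2}\delta^k/k \ge 0$ on $(0,1)$. Since the statement is a textbook result, I would in practice simply cite \cite{Chernoff_one_sided}; the sketch above supplies a short self-contained derivation if one is wanted.
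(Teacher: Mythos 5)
Your proposal is correct: the paper does not prove this statement at all but simply cites it as a standard result, and your derivation is the textbook exponential-moment argument that the cited source uses. All the details check out — the MGF bound via $1+x\le e^x$, the optimisers $t=\ln(1+\delta)$ and $t=-\ln(1-\delta)$, and the two calibration inequalities (the rearrangement to $\ln(1+\delta)\ge 2\delta/(2+\delta)$ with derivative $\delta^2/\bigl((1+\delta)(2+\delta)^2\bigr)$, and the Taylor-series argument for the lower tail) are all accurate, so citing the reference or including your sketch are both fine.
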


\section{Lower Bound}
In this short section we give a proof of Theorem \ref{thm:lower_bd}, which follows from a counting argument. 
\begin{proof}[Proof of Theorem \ref{thm:lower_bd}]
    Let $s$ be the number of connected graphs in $\cH^{(r)}_{rD+1}(n, D)$ on vertex set $\{1,\dots,n\}$ where $1,\dots,n$ is a $D$-degeneracy ordering. We can construct such a graph as follows. For each $r \le i \le n$, we add between $1$ and $D$ edges between vertex $i$ and the preceding vertices, with the restriction that any vertex $v \in [i-1]$ can only be included in an edge with vertex $i$ if the degree of $v$ is currently at most $rD$. By Observation \ref{obs:vtxdeg}, the number of vertices of degree at least $rD +1$ in $[i-1]$ is at most $\frac{rD}{rD + 1}(i-1)$. This implies there are at least $\frac{1}{rD+1}(i-1)$ vertices of degree at most $rD$ within $[i-1]$. Each edge added at this step will use  $r-1$ of these vertices, so by~\eqref{eq:stirling} the number of choices at this step is
    \begin{equation*}
        \binom{{\frac{1}{rD+1}(i-1)}}{r-1} \ge \left(\frac{i-1}{(r-1)(rD+1)}\right)^{r-1}.
    \end{equation*}
    This is at least $\frac{1}{2r^2D}i^{r-1}$ when $n$ is sufficiently large and $i\ge \sqrt{n}$. Thus, the number of choices we can make in the last $n-\sqrt{n}$ steps is at least
    \begin{align}\label{eq:lower_bd_s}
        \prod_{i=\sqrt{n}}^n\binom{{i^{r-1}}/{(2r^2D)}} {D}
        %\ge \prod_{i=\sqrt{n}}^n \left(\frac{1}{2r^2D^2}\right)^Di^{(r-1)D} 
        \geByRef{eq:stirling} \left(\frac{1}{2r^2D^2}\right)^{Dn}\left(\frac{n!}{\sqrt{n}!}\right)^{(r-1)D} \geByRef{eq:stirling_v1} 10^{-Dn}(rD)^{-2Dn}n^{(r-1)Dn}.
    \end{align}
    We conclude that $s \ge 10^{-Dn}(rD)^{-2Dn}n^{(r-1)Dn}$. 

    Suppose $\Gamma$ is a $\cH^{(r)}_{rD+1}(n, D)$-universal graph. For an upper bound on $s$, we count the subgraphs of $\Gamma$ which are connected and have exactly $n$ vertices, together with an ordering of their vertices. Any connected graph in $\cH^{(r)}_{rD+1}(n, D)$ has between $n-1$ and $Dn$ edges, so for each $n-1 \le q 
    \le Dn$ we can pick $q$ edges of $\Gamma$. If these edges span exactly $n$ vertices, we assign all $n!$ possible labelings to these vertices. The number of labeled graphs we construct in this way is at most  
    \begin{equation*}
        \sum_{q={1}}^{Dn} \binom{e(\Gamma)}{{q}}n! \le Dn\binom{e(\Gamma)} {{Dn}}n! \leByRef{eq:stirling} 10^{Dn}(Dn)^{-Dn}e(\Gamma)^{Dn}n^n,
    \end{equation*}
    where we use $e(\Gamma) \ge 2Dn$. As $\Gamma$ is $\cH^{(r)}_{rD+1}(n, D)$-universal, this is an upper bound on $s$. Using~\eqref{eq:lower_bd_s} gives
    \begin{equation*}
        10^{-Dn}(rD)^{-2Dn}n^{(r-1)Dn} \le 10^{Dn}(Dn)^{-Dn}e(\Gamma)^{Dn}n^n.
    \end{equation*}
    Rearranging and taking roots gives the result.
\end{proof}

\section{Random Block Model and Proof of Theorem \ref{thm:main}}
The proof of Theorem \ref{thm:main} generalises the random block model of Allen, Böttcher and Liebenau \cite{pja}. That is, we will define an $r$-graph $\Gamma$ on $\Theta(n)$ vertices, whose vertices are split into $\Theta(\log\log n)$ \emph{blocks} with sizes growing from $n^{1-1/D}$ up to $n$, and with edges placed between and in blocks randomly with probability depending on the size of the blocks they lie between.

We will then show that any $D$-degenerate graph can be embedded in $\Gamma$, putting the vertices of highest degree in the smallest block down to constant degree in the largest block.

\begin{definition}[Random block model, $\Gamma(r,  n, D)$]\label{def:RB}
Given any natural numbers $r, D, n \ge 2$, we define $N$ to be the smallest integer such that $n^{D^{1-N}} \le 3^{D^2}$. For each $0 \le k \le N $, we define the variable
\begin{equation*}\label{eq:deltas}
    \Delta_k := 
\begin{cases}
%0 & \text{for $k=0$}\\
n^{D^{-k}} & \text{for $0 < k \le N$}\\
Dn  & \text{if $k = 0$.}\\
\end{cases}
\end{equation*}
The vertex set of $\Gamma: = \Gamma(r,  n, D)$ is the disjoint union $W = W_1 \sqcup \hdots \sqcup W_N$, where each $W_k$ is called a \emph{block} and has order 
\begin{equation*}
    |W_k| %=  3^Drn^{1-D^{-k}} 
    =100 \cdot 3^Drn/\Delta_k .
\end{equation*}

For any subset $s \se W$, we define an \emph{intersection pattern} $\pi \in \{0, \hdots, |s|\}^N$, where for each $j \in [N]$, $\pi_j (s) = |s \cap W_j|$. We define \[p^*: = 2^{(r-1)}(2(r-1)D)^{1/D}(\log  n)^{2/D}(\log\log n)^{r+1}/\Delta_1.\] For each $s \in \binom{W}{r}$, define 
\begin{equation*}\label{eq:prob}
p_{s} := %\min\bigg\{1, p^* \prod_{i=1}^Nn^{{\pi_i(v)}D^{-i}} \bigg\} =
\min\bigg\{1, p^* \prod_{i=1}^N\Delta_i^{{\pi_i(s)}} \bigg\}.
\end{equation*}
The edge set $E(\Gamma)$ is then defined by letting each set $s \in \binom{W }{r}$ be an edge of our hypergraph independently with probability $p_{s}$.

We further partition each block $W_k$ into \emph{sub-blocks} $W_{k, 1} \hdots W_{k, \log n}$, where $W_{k, 1} = \frac 1 2 |W_k|$ and for $j \ge 2$, $|W_{k, j}| \ge \frac{|W_{k}| }{2\log n} $.
\end{definition}

The meaning of $\Delta_k$ is the following: when we embed a $D$-degenerate graph to $\Gamma$, we will embed vertices of degree between $\Delta_{k}$ and $\Delta_{k-1}$ to the block $W_k$.

We now give some properties of this model.

\begin{proposition}[Properties of the Model] \label{modelprops}
    For sufficiently large $n$, $D \ge 2$ and for $N, \Delta_{N-1}$ as in Definition~\ref{def:RB}, the following holds.
    \begin{enumerate}[(a)]
        \item $\frac{\log\log n}{2\log D} \le N < \log\log n$ and $3^D \le \Delta_{N-1} \le 3^{D^2}$ \label{props1},
        \item ${100rn} \le |W_N| \le 100 \cdot 3^{D-1}rn$,\label{propsW_N}
        \item $\Gamma(r, n, D)$ has at most $200\cdot 3^Drn$ vertices and asymptotically almost surely at most \[2(20\cdot 3^Dr)^r(\log\log n)^{2r+1}(\log n)^{2/D}n^{r-1/D}\] edges. \label{props3_edges}
    \end{enumerate}
\end{proposition}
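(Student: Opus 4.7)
\textbf{Proof plan for Proposition~\ref{modelprops}.}

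My plan is to handle the three parts in sequence, with (a) and (b) reducing to direct algebra on the defining inequalities for $N$ and $\Delta_k$, and (c) requiring a first-moment calculation combined with Markov's inequality.

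For (a), I will exploit the minimality of $N$: by definition $\Delta_{N-1} = n^{D^{1-N}} \le 3^{D^2}$ while $\Delta_{N-2} = n^{D^{2-N}} > 3^{D^2}$, and since $\Delta_{N-1} = \Delta_{N-2}^{1/D}$, taking $D$-th roots gives $\Delta_{N-1} > 3^D$. Taking $\log_D\log$ of both defining inequalities and rearranging yields $\log_D\log n - O(1) \le N \le \log_D\log n + O(1)$; since $\log D \ge 1$ and (for $D \ge 2$) $\log_D\log 3 > 0$, this gives $\tfrac{\log\log n}{2\log D} \le N < \log\log n$ for $n$ sufficiently large. For (b), combining (a) with $\Delta_N = \Delta_{N-1}^{1/D}$ gives $3 < \Delta_N \le 3^D$, which substituted into $|W_N| = 100\cdot 3^D rn/\Delta_N$ yields the claimed bounds.

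For the vertex count in (c), the inequality $\Delta_{N-j} = \Delta_N^{D^j} \ge 3^{D^j} \ge 3^{2^j}$ (valid for $D \ge 2$) gives
\[
|W| = 100\cdot 3^D rn \sum_{k=1}^N \Delta_k^{-1} \le 100\cdot 3^D rn \sum_{j=0}^{\infty} 3^{-2^j} < 50\cdot 3^D rn,
\]
well within the required $200\cdot 3^D rn$. For the edge count, my plan is to bound $\EE[e(\Gamma)]$ by summing over intersection patterns $\pi = (\pi_1, \dots, \pi_N)$ with $\sum_i \pi_i = r$. The key identity $|W_i|\Delta_i = 100\cdot 3^D rn$, combined with~\eqref{eq:stirling} applied factor-by-factor, yields $\prod_i \binom{|W_i|}{\pi_i} \le (100 e \cdot 3^D r)^r n^r \prod_i \Delta_i^{-\pi_i}$. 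Using the two natural bounds $p_s \le p^*\prod_i\Delta_i^{\pi_i}$ and $p_s \le 1$ (each sharp in one regime of the $\min$ in the definition of $p_s$), one finds that every pattern contributes at most $(100e\cdot 3^D r)^r n^r p^*$ to the expectation. Since by (a) the number of patterns is at most $\binom{N+r-1}{r} \le (2\log\log n)^r$, substituting the definition of $p^*$ then gives
\[
\EE[e(\Gamma)] \le 2^{r-1}(2(r-1)D)^{1/D}(200e\cdot 3^D r)^r (\log\log n)^{2r+1}(\log n)^{2/D}\, n^{r-1}.
\]
Since $D \ge 2$, the ratio of this to the claimed bound $2(20\cdot 3^D r)^r (\log\log n)^{2r+1}(\log n)^{2/D} n^{r-1/D}$ is $C_{r,D}\, n^{-(D-1)/D} = o(1)$, so Markov's inequality delivers the desired a.a.s.\ bound.

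The main obstacle will be the constant-chasing in the edge count: the expectation is already polynomially smaller than the claimed bound, but the claimed bound carries only the modest prefactor $2$ together with several polylogarithmic and exponential-in-$D$ factors, so one must keep careful track of all constants in the pattern sum to verify that the polynomial saving $n^{-(D-1)/D}$ really dominates for large $n$.
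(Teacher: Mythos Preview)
Your treatment of parts (a), (b) and the vertex count in (c) is essentially the same as the paper's, and your expectation calculation for $e(\Gamma)$ via intersection patterns is equivalent to the paper's calculation via ordered $r$-tuples of block indices. However, there is a genuine gap in the concentration step of (c).

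You write that substituting $p^*$ gives
\[
\EE[e(\Gamma)] \le 2^{r-1}(2(r-1)D)^{1/D}(200e\cdot 3^D r)^r (\log\log n)^{2r+1}(\log n)^{2/D}\, n^{r-1},
\]
but this is an arithmetic slip: $p^*$ carries a factor $\Delta_1^{-1}=n^{-1/D}$, not $n^{-1}$, so the correct exponent is $n^{r-1/D}$. Once this is fixed, your expectation bound and the target bound $2(20\cdot 3^D r)^r(\log\log n)^{2r+1}(\log n)^{2/D}n^{r-1/D}$ have the \emph{same} polynomial order; the ratio is a constant depending on $r,D$, not $n^{-(D-1)/D}$. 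Consequently Markov's inequality only yields
\[
\PP\big(e(\Gamma)>2\,\EE[e(\Gamma)]\big)\le \tfrac12,
\]
which is not $o(1)$, so it cannot deliver the a.a.s.\ conclusion.

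The paper avoids this by observing that $e(\Gamma)$ is a sum of independent Bernoulli variables and applying the Chernoff bound (Theorem~\ref{chernoff}) with $\delta=1$: since $\EE[e(\Gamma)]\ge |E(W_1)|\ge n^{r-r/D}$, one gets
\[
\PP\big(e(\Gamma)>2\,\EE[e(\Gamma)]\big)\le \exp\!\big(-\tfrac13\,\EE[e(\Gamma)]\big)\le \exp\!\big(-\tfrac13 n^{r-r/D}\big)=o(1).
\]
Replacing Markov by Chernoff in your final step (and correcting the exponent) repairs the argument.
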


\begin{proof}
    The first upper bound in Proposition \ref{modelprops}~\eqref{props1} follows from using $n^{D^{1-(N-1)}} > 3^{D^2}$ to obtain $N < \log_D(\log_3(n)) < \log \log n$. The other inequalities can be similarly calculated from the definition of $N$. This then implies~\eqref{propsW_N} as $|W_N| = \frac{3^Drn}{\Delta_{N-1}^{1/D}}$. 

    The upper bound on the number of edges given in~\eqref{props3_edges} can be computed by calculating the expected number of edges within and between the blocks. Fix some $r$-tuple $S = (i_1, \hdots, i_r)$, where the $i_\ell$ are chosen from  $[N]$. For each $k \in [N]$, let $m_S(k)$ denote the multiplicity of $k$ in $S$. We then have 
    \begin{align*}
        \EE(|E(W_{i_1}, \dots, W_{i_r})|) &\le \min\bigg\{1, p^*\prod_{\ell = 1}^r\Delta_{i_\ell} \bigg\}\cdot  \Bigg(\prod_{\substack{k \in [N]}} |W_k|^{m_S(k)} \Bigg)\,.
        \end{align*}
    Here the inequality accounts for the cases where some of the $W_{i_j}$ are the same. Hence for each $r$-tuple $S = (i_1, \dots, i_r)$, 
        \begin{align*}
        \EE(|E(W_{i_1}, \dots, W_{i_r})|) &\le p^*\prod_{\ell = 1}^r\Delta_{i_\ell}  \cdot 100 \cdot 3^Drn/\Delta_{i_\ell}\\
         &= p^*100 \cdot 3^{rD}r^rn^{r} \\
         &\le (20\cdot 3^Dr)^r(\log\log n)^{r+1}(\log n)^{2/D}n^{r-1/D} .
    \end{align*}
  Summing over all $N^r \le(\log \log n)^r$ $r$-tuples gives \[\EE(|E(\Gamma)| \le (20\cdot 3^Dr)^r(\log\log n)^{2r+1}(\log n)^{2/D}n^{r-1/D}.\]

  We also clearly have $n^{r-r/D} < |E(W_1)| \le \EE(|E(\Gamma)| )$, hence by the Chernoff bound in Theorem \ref{chernoff}~\eqref{chernoff_upper} with $\delta= 1$, 
  \begin{equation*}
      \PP\big(|E(\Gamma)|> 2\cdot (20\cdot 3^Dr)^r(\log\log n)^{2r+1}(\log n)^{2/D}n^{r-1/D}\big) \le \exp(-(n^{r-r/D})/3) = o(1).
  \end{equation*} 
  
\end{proof}

Informally, our embedding strategy is as follows. We embed the vertices of a $D$-degenerate $H$ one by one in degeneracy order into $\Gamma$, at each time $t$ choosing to embed the next vertex $v_t$ into the block $W_k$ according to the degree of $v_t$ being between $\Delta_k$ and $\Delta_{k-1}$, and choosing the first sub-block of $W_k$ where this is possible without having embedded all vertices of an edge of $H$ to a non-edge of $\Gamma$. Formalising, this, we obtain:

\begin{definition}[Embedding Strategy]\label{def:embedding_strat}
    Given any $D$-degenerate $r$-graph $H$, let $v_1, \hdots, v_n$ be a  $D$-degeneracy ordering of $V(H)$, and let $\Gamma$ have the properties of Proposition~\ref{modelprops}.  Our embedding strategy is defined by the iterative construction of a \emph{partial embedding $\psi_t : \{v_1, \dots, v_t\}  \to \Gamma$}. We start with $\psi_0$, the trivial partial embedding of no vertices into $\Gamma$. 
    
    For each $1 \le t \le n$, we say a vertex $u \in \Gamma$ is a \emph{candidate} for $v_t$ if $\psi_{t-1}(L^-(v_t)) \se L(u)$. We say such a vertex $u\in \Gamma$ is an \emph{available candidate} if in addition $u \notin \im(\psi_{t-1})$. Let $k$ be minimal such that $\Delta_k < \deg(v_t)$. If there is no such $k$ choose $k = N$. Choose $j$ minimal such that there is some $u \in W_{k, j}$ which is an available candidate for $v_t$. We define $\psi_t=\psi_{t-1}\cup\{v_t\to u\}$. If there is no available candidate for $v_t$ in  $W_k$, we say $\psi_t$ and the subsequent partial embeddings do not exist and that the embedding strategy \emph{fails at time $t$}. If the embedding strategy does not fail, then $\psi: = \psi_n$ gives an embedding of $H$ into $\Gamma$.
\end{definition}

The requirement for each vertex to be embedded to an available candidate ensures that $\psi$ is indeed an embedding. The additional requirement for each vertex $v\in V(H)$ to be embedded to the block $W_k$ for $k$ such that $\Delta_k < \deg(v)$ further ensures (by Observation \ref{obs:vtxdeg}) that each block always contains vertices that are not embedded to.

Given any partial embedding $\psi_t$ and any vertex $v \in H$ whose back-link is in the domain of $\psi_t$ (i.e.\ is embedded), we define the \emph{embedded back-link of $v$} by $\psi_t(L^-(v)) \se \binom{W }{ {r-1}}$. To simplify notation for any subset $B \se \binom{W}{r-1}$ we let $V(B) = \bigcup B \se W$ denote the set of vertices contained in $B$. 

In our proof that this embedding strategy does not fail, we analyse the multi-set of embedded back-links of all the vertices in some sub-block, rather than just the embedded back-link of a single vertex. This may indeed be a multi-set, as two vertices may share the same back-links and for our proof we need to count with multiplicity. We will consider a multi-set $\cB$, where each $B \in \cB$ is a subset of $\binom{W} {{r-1}}$. Let $\cB$ be some such multi-set. We denote by $|\cB|$ the number of (not necessarily distinct, so counting with multiplicity) sets contained in $\cB$, and call this the \emph{size} of $\cB$. We let $V(\cB) \se W$ denote the set of vertices contained in the union of all $\bigcup_{B \in \cB}V(B)$ where $B \in \cB$. We now define three key properties a multi-set of the vertices of $\Gamma$ should exhibit if they are the embedded back-links of vertices. The first and second properties are obvious consequences of our embedding strategy. For the third, we will give a deterministic condition that $\Gamma=\Gamma(r,n,D)$ is likely to satisfy and we will show this condition implies that our embedding strategy also maintains the third property.

\begin{definition}[Well-behaved multi-set]\label{def:well}
  Given $r, n, D \ge 2$, let $W = \bigcup_{k} W_k$ and $W_k = \bigcup_{j} W_{k,j}$ be the vertex set of $\Gamma=\Gamma(r, n, D)$ as given by Definition \ref{def:RB}.  For $1\le t \le n,$ let $\mathcal{B} = \{B_i\}_{i=1}^t$ be a multi-set, where each $B_i\subseteq \binom{W }{{r-1}}$. Then $\cB$ is called \emph{well-behaved} if
  \begin{enumerate} [(WB1)]
  \item  \label{WB1}
    $|B_i| \le D$  for all $1\le i \le t$, 
  \item \label{WB2} 
    for all $1\le k\le N$ and for all $u\in W_k$ we have $\big|\{ i \in [t] : u \in V(B_i) \} \big| \le \Delta_{k-1}$,  and 
  \item \label{WB3}
    for each $1\le k\le N$ and each $1\le j\le\log n$, we have $\big|V(\mathcal{B})\cap W_{k,j}\big|\le\tfrac12|W_{k,j}|$.
  \end{enumerate} 
\end{definition} 

Our goal is to find that for each $k \in [N]$ and $j \in [\log n]$ and any well-behaved multi-set $\cB$, some significant proportion of vertices in $W_{k, j}$ have link containing a set from $\cB$. In terms of back-links, if $\cB$ is the multi-set of embedded back-links of some $x_1,\dots,x_t\in V(H)$, then we are saying that a significant proportion of vertices in $W_{k,j}$ are candidates for at least one of $x_1,\dots,x_t$. This is stated precisely in the following Lemma. 

\begin{lemma}\label{lem:pseudorandom}
Let $D, r \ge 2$ and let $n$ be sufficiently large. Then $\Gamma(n, r, D)$ as given in Definition \ref{def:RB} a.a.s.\ satisfies the following. For every $t \in [n]$, every well-behaved multi-set $\cB$ containing $t$ subsets of $ \binom{W}{{r-1}}$, and every $k \in [N]$ and $j \in [\log n]$, we have 
\begin{equation}\label{eq:lem}
    \big|\big\{u \in W_{k, j} : \exists B \in \cB \text{ such that } B \se L(u)\big\}\big|  \ge \min\Big\{\frac{1}{16}, \frac t 4 (\log  n)^{2}(\log\log n)^{D}n^{D^{1-k}-1}
    \Big\}|W_{k, j}|. 
    %& = \min\Big\{\frac{1}{16}, \frac t 4 \frac{\Delta_k^D\Delta_1^{1/2}}{n}\Big\}|W_{k, j}|
\end{equation}
\end{lemma}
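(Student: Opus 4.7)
The plan is to fix $k, j, t$ and a well-behaved multi-set $\cB$ of size $t$, lower bound the expected number of vertices $u \in W_{k,j}$ for which some $B \in \cB$ lies inside $L(u)$, apply Chernoff for concentration, and then take a union bound over all quadruples $(k, j, t, \cB)$. Write $A_u$ for the event that $u$ is such a vertex. The crucial observation enabling Chernoff is that whenever $u \in W_{k,j} \setminus V(\cB)$, the event $A_u$ depends only on the edges $\{u\} \cup X$ with $X \se V(\cB)$; since these edge sets are disjoint for distinct $u$ outside $V(\cB)$, the events $\{A_u\}_{u \in W_{k,j} \setminus V(\cB)}$ are mutually independent. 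Property (WB3) guarantees that at least $\tfrac12|W_{k,j}|$ vertices of $W_{k,j}$ lie outside $V(\cB)$, which is where we run the Chernoff argument.

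To lower bound $\PP[A_u]$ I apply Janson's inequality to the family of monotone events $\{B \se L(u)\}_{B \in \cB}$. The first moment $\mu_u := \sum_{B \in \cB} \prod_{X \in B} p_{\{u\} \cup X}$ is computed from Definition~\ref{def:RB}: for $u \in W_k$ one has $p_{\{u\} \cup X} = p^* \Delta_k \prod_i \Delta_i^{\pi_i(X)}$ whenever the $\min$ is inactive, and together with $|B| \le D$ from (WB1) this yields an explicit lower bound. The Janson dependency sum, which collects pairs $B \ne B'$ sharing an $(r-1)$-set, is controlled via (WB2), which caps how often any single vertex of $V(\cB)$ can appear in the sets of $\cB$; a short computation bounds it by $\mu_u$. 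Janson then gives $\PP[A_u] \ge 1 - \exp(-\mu_u/2) \ge \min\{1/4,\, \mu_u/4\}$.

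Summing $\PP[A_u]$ over $u \in W_{k,j} \setminus V(\cB)$ and substituting $|W_{k,j}| = \Theta(n/(\Delta_k \log n))$ yields an expected number of good vertices at least $c \cdot \min\big\{|W_{k,j}|,\, t(\log n)^2(\log\log n)^D n^{D^{1-k}-1}|W_{k,j}|\big\}$ for an absolute constant $c$, which matches the right hand side of~\eqref{eq:lem} up to constant factors; here the two cases of the $\min$ correspond exactly to whether $\PP[A_u] \ge 1/4$ or $\PP[A_u] \ge \mu_u/4$. Chernoff (Theorem~\ref{chernoff}\eqref{chernoff_lower}) applied to the independent indicators $\{\mathbbm{1}[A_u]\}_{u \in W_{k,j} \setminus V(\cB)}$ then shows that the actual count is at least $\tfrac{1}{16}$ of the expectation except on an event of probability at most $\exp(-c'\mu)$, where $\mu$ denotes the expectation.

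The main obstacle is the union bound. The number of well-behaved multi-sets of size $t$ is at most $\exp(O(D(r-1) t \log n))$, since each of the $t$ entries in $\cB$ is determined by at most $D$ elements of $\binom{W}{r-1}$, hence by at most $D(r-1)$ vertices chosen from $W$, which has size $O(n)$. Closing the union bound requires $\exp(-c' \mu)$ to dominate this count, i.e.\ $\mu \gtrsim D(r-1) t \log n$. The second case of our lower bound gives $\mu \ge c \cdot t \log n \cdot (\log\log n)^D$, so the $(\log\log n)^D$ slack --- which is precisely the reason the polylogarithmic factor appears in the definition of $p^*$ --- absorbs the combinatorial count once $n$ is sufficiently large. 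A final union bound over the $N \cdot \log n \cdot n$ choices of $(k, j, t)$ costs only a polylogarithmic factor. Tracking constants simultaneously across the different regimes of $t$ and $k$, and verifying that the $\min$ in the definition of $p_s$ is inactive for the configurations that actually matter in the lower bound on $\mu_u$, will be the most delicate parts.
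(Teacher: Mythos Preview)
Your architecture matches the paper's exactly: establish a per-vertex lower bound on $\PP[A_u]$ (the paper isolates this as a separate Lemma~\ref{lem:cand_for_some}, proved via the second-moment method rather than Janson, which is cosmetically different but uses the same quantities), observe that the events $A_u$ are independent across $u\in W_{k,j}\setminus V(\cB)$, apply Chernoff, and union bound over $(k,j,t,\cB)$.

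The genuine gap is your claim that the Janson dependency sum $\Delta=\sum_{B\cap B'\ne\emptyset}\PP[E_B\cap E_{B'}]$ is bounded by $\mu_u$ via ``a short computation''. This is false in general. Take $D=2$, $r=2$, $k=3$, and let every $B\in\cB$ be $\{X_0,X_B\}$ where $X_0\in W_2$ is a fixed common vertex and the $X_B\in W_2$ are distinct. Then (WB\ref{WB2}) allows $t=\Delta_1=n^{1/2}$, every pair $B\ne B'$ is dependent, and one computes $\Delta\approx\mu_u^2/p_{u\cup X_0}$, so $\Delta/\mu_u\approx t\,p_{u\cup X_0}\approx n^{3/8}\log n\to\infty$. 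Thus the simple form of Janson does not yield $\PP[A_u]\ge\min\{1/4,\mu_u/4\}$.

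What is actually needed is a bound on $\mu_u^2/\Delta$ (equivalently, extended Janson or Chebyshev), and this is the technical heart of the argument, not a side computation. The paper obtains it by (i) restricting to the sub-multi-set $\cB'\subseteq\cB$ with the most common \emph{block intersection pattern}, losing a factor $(2\log\log n)^{(r-1)D}$ in $t$, so that all $\PP[E_B]$ coincide; (ii) for fixed $B$, stratifying the dependent $B'$ by the largest index $\ell$ with $V(B\cap B')\cap W_\ell\ne\emptyset$, so that (WB\ref{WB2}) bounds their number by $\bo'_\ell\Delta_{\ell-1}$; and (iii) using the identity $\Delta_\ell^D=\Delta_{\ell-1}$ to cancel the factor $\Delta_{\ell_*-1}$ coming from (ii) against $(p^*\Delta_k\Delta_{\ell_*})^D$ arising in the numerator. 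Your sketch correctly identifies (WB\ref{WB2}) as the relevant tool but does not indicate any of steps (i)--(iii), and the assertion $\Delta\le\mu_u$ suggests you have not yet located where the geometry of the block sizes and the exponent of $\log\log n$ in $p^*$ actually enter.
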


The proof of Lemma \ref{lem:pseudorandom} will be deferred to Section \ref{sec:lemmas}. We now directly show how Lemma \ref{lem:pseudorandom} implies Theorem \ref{thm:main}.

\begin{proof}[Proof of Theorem \ref{thm:main}]
Let $\Gamma(n, r, D)$ be as given in Definition~\ref{def:RB}. Then by Proposition~\ref{modelprops}~\eqref{props3_edges} and Lemma~\ref{lem:pseudorandom}, we have that $\Gamma$ a.a.s.\ has at most 
\begin{equation*}
    2(20\cdot 3^Dr)^r(\log\log n)^{2r+1}(\log n)^{2/D}n^{r-1/D}
\end{equation*} edges, and satisfies the following. For every $t \in [n]$, every well-behaved multi-set $\cB$ of $ \binom{W}{{r-1}}$, and every $k \in [N]$ and $j \in [\log n]$, we have 
\begin{equation}\label{eq:pseudorandom}
    \big|\big\{u \in W_{k, j} : \exists B \in \cB \text{ such that } B \se L(u)\big\}\big| \ge \min\Big\{\frac{1}{16}, \frac {|\cB|} 4 (\log  n)^{2}(\log\log n)^{D}n^{D^{1-k}-1}
    \Big\}|W_{k, j}|. 
    %& = \min\Big\{\frac{1}{16}, \frac t 4 \frac{\Delta_k^D\Delta_1^{1/2}}{n}\Big\}|W_{k, j}|
\end{equation}

Let $H$ be a $D$-degenerate graph on $n$ vertices, and let  $v_1, \hdots, v_n$ be a an ordering of its vertices, such that for each $v_i$ we have $L^-(v_i) \le D$. We use the embedding strategy of Definition \ref{def:embedding_strat}, which gives a partial embedding $\psi_t$ of $\{v_1, \hdots, v_t\}$. Note that this strategy guarantees any set of embedded back-links satisfies (WB\ref{WB1}) and (WB\ref{WB2}). We want to prove this embedding strategy is successful. Suppose, for a contradiction, the embedding fails at time $t^*$ and let ${k^*}$ be such that $\Delta_{k^*} < v_{t^*} \le \Delta_{{k^*}-1}$. 

By definition, the embedding failed because there was no available candidate for $v_{t^*}$ in $W_{k^*}$. We use~\eqref{eq:pseudorandom} to give a lower bound on the number of candidates in $W_{k^*}$. Let $\{b_1, \hdots, b_m\}=L^-(v_{t^*})$, and let  
$B = \{\psi_{t^*-1}(b_1), \hdots, \psi_{t^*-1}(b_m)\}$. Clearly $\{B\}$ is well-behaved as $|V(\{B\})| \le(r-1)D < \tfrac{1}{2}|W_{k, j}|$ for all $k, j$, so by \eqref{eq:pseudorandom} the number of vertices $u$ in $W_{{k^*}, \log n}$ with $B \se L(u)$ is at least
\begin{align}
    &\min\Big\{\frac{1}{16}, \frac {|\cB|} 4 (\log  n)^{2}(\log\log n)^{D}n^{D^{1-k}-1}
    \Big\}|W_{k, j}|
    \\ &\ge \min\Big\{\frac{100 \cdot 3^Drn}{16\Delta_k}, \frac {100\cdot 3^D rn|\cB|  (\log  n)^{2}(\log\log n)^{D}\Delta_k^D}{4n\Delta_k}
    \Big\}%\frac 1 4 \frac{(\log  n)^{2}(\log\log n)^{D}\Delta_{k^*}^D}{n} 3^Dr\frac{n}{2\Delta_{k^*}\log n} 
    \nonumber\\
    & \ge \min\Big\{5 \cdot 3^Drn^{1-D^{-k}},  25\cdot 3^D r (\log  n)^{2}(\log\log n)^{D}n^{D^{1-k}-D^{-k}}\big\} \nonumber
    \\&> 1. \label{eq:cand_set_size}
\end{align}

As the embedding strategy failed at time $t^*$, these candidates must have been covered by $\im \psi_{t^*-1}$. In order to find a contradiction, we define the following constants.
\begin{equation*}
    L_{k, j} =
\frac{1}{(4\log n)^{j-1}}\cdot
\begin{cases}
\frac{rnD}{\Delta_k} & \text{for $k \in [N-1]$ }\\
n & \text{for $k = N$.}\\
\end{cases}
\end{equation*}
We show inductively that for all $t \in [t^*-1]$, each $L_{k, j}$ is an upper bound on $|\im\psi_t \cap W_{k, j}|$ (note that $t^* >1$, as there is always an available candidate in the appropriate block to embed the first vertex). In particular, this will show that 
\begin{equation*}
    |\im\psi_{t^*-1} \cap W_{{k^*}, \log n}| \le L_{{k^*}, \log n} \le \frac{1}{(4\log n)^{\log n-1}}n < \frac{1}{5^{\log n}}n  <  1,
\end{equation*}
and hence by~\eqref{eq:cand_set_size} there is an available candidate for $v_{t^*}$ in $W_{k^*}$, a contradiction. 

We proceed with the induction. In the base case we embed the first vertex to some $W_{k, 1}$, and we have $|\im \psi_1 \cap W_{k, 1}|  = 1 \le L_{k, 1}$. For all other sub-blocks $W_{k, j}$ we trivially have $|\im \psi_1 \cap W_{k, j}| = 0 \le L_{k, j}$.

We now let $1 < t \le t^* -1$, and assume that for $i = t -1$ and each $k \in [N]$ and $ j \in [\log n]$, we have $|\im\psi_i \cap W_{k, j}| \le L_{k, j}$. Suppose, for a contradiction, that $L_{k, j}$ is not an upper bound on $|\im\psi_t \cap W_{k, j}|$. As this upper bound held at time $t-1$, it must have been violated when we embedded $v_t$. Suppose $v_t$ was embedded to $W_{k', j'}$. Then 
\begin{align}
    & L_{k', j'} < |W_{k', j'} \cap \im\psi_{t}|  \le L_{k', j'}+1   , \text{ and } \nonumber\\
    &|W_{k, j} \cap \im\psi_{t}|  \le L_{k, j}  \text{ for all $(k,j) \in [N]\times [\log n] \setminus \{(k',j')\}$}.\label{eq:limits}
\end{align}

First, suppose $j' =1$. If $k' = N$, then $|W_{N, 1} \cap \im\psi_{t}|  > L_{N, 1}  = n$, a contradiction to $|V(H)| = n$. If $k' < N$, then $|W_{k', 1} \cap \im\psi_{t}|  > L_{k', 1}  = \frac{rnD}{\Delta_{k'}}$, a contradiction to Observation \ref{obs:vtxdeg}. 

Now suppose $j' > 1$. We let $\cB =\{\psi_t(L^-(x)) : \psi_t(x) \in W_{k', j'}\}$ be the multi-set of embedded back-links of all the vertices which were embedded to $W_{k', j'}$. We will show that these back-links are well-behaved, and that~\eqref{eq:limits} and~\eqref{eq:pseudorandom} together imply that there is an available candidate for $v_t$ in $W_{k', j'-1}$, which is the contradiction we require. As mentioned above, properties (WB\ref{WB1}) and (WB\ref{WB2}) are consequences of our embedding strategy, so it only remains to verify (WB\ref{WB3}). We note that $V(\cB) \se \im\psi_{t-1}$, and prove the stronger property that for all $1 \le k \le N$ and $ 1 \le j \le \log n$, we have $|\im\psi_{t-1} \cap W_{k, j}| \le L_{k, j} + 1 \le \frac{1}{16}|W_{k, j}|$. Recall that 
\begin{align}
    L_{k, j} &\le\frac{L_{k, 1}}{4\log n} &&\text{for all $k \in [N]$ and $ j> 1$}, \nonumber\\
    |W_{k, 1}|&=50 \cdot 3^Drn/\Delta_k &&\text{for $k < N$}, \label{eq:Wk1_size}\\
    |W_{N, 1}| &\ge 50rn  &&\text{by Proposition \ref{modelprops}~\eqref{propsW_N}, and} \nonumber\\
    |W_{k, j}| &\ge \frac{|W_{k,1}|}{\log n} &&\text{for all $k \in [N]$ and $j \ge 1$}. \label{eq:Wkj_size}
\end{align}
Hence, for $j =1$, 
\begin{equation}\label{eq:j=1}
\begin{split}
L_{N, 1} + 1  &= n + 1 < \frac{50}{16}rn \le \frac{1}{16}|W_{N, 1}|,\\[6pt]
L_{k, 1} + 1  &= rnD/\Delta_k + 1  < \frac{50}{16}3^Drn/\Delta_k  = \frac{1}{16}|W_{k, 1}| \text{ for $k<N$.}
\end{split}
\end{equation}
For $j > 1$ and all $k \in [N]$, 
\begin{align*}
%    &L_{k, 1} + 1 = rnD/\Delta_k + 1  < \frac{50}{16}3^Drn/\Delta_k  = \frac{1}{16}|W_{k, 1}| , \\
    &L_{k, j} + 1 \le \frac{1}{4\log n}L_{k, 1} + 1 \leByRef{eq:j=1} \frac{1}{64 \log n}|W_{k, 1}| + 1 \leByRef{eq:Wkj_size} \frac{1}{64}|W_{k, j}| + 1 < \frac{1}{16}|W_{k, j}|.
\end{align*}

 This completes the check that $\cB$ is well-behaved. We can therefore apply~\eqref{eq:pseudorandom} to $\cB$ with $k'$ as $k$ and $j'-1$ as $j$. By \eqref{eq:Wk1_size} and \eqref{eq:Wkj_size} we have $|W_{k', j'-1}| \ge \tfrac{3^Drn}{2\Delta_{k'}\log n}$. Since $|\cB| = |W_{k', j'} \cap \im\psi_{t}| > L_{k', j'}$, we get
\begin{align*}
    |\{u \in W_{k', j'-1} : \exists B \in \cB \text{ with } B \se L(u)\}| &\ge \min\Big\{\frac{1}{16}, \frac {L_{k', j'}}{4} \frac{(\log  n)^{2}(\log\log n)^{D}n^{D^{1-k'}}}{n}\Big\}|W_{k', j'-1}| \\
    & = \min\Big\{\frac{1}{16}, \frac {L_{k', j'-1}}{16\log n} \frac{(\log  n)^{2}(\log\log n)^{D}\Delta_{k'}^D}{n}\Big\}|W_{k', j'-1}| \\
    &=\min\Big\{\frac{|W_{k', j'-1}|}{16}, \frac {L_{k', j'-1}3^Dr\Delta_{k'}^{D-1}(\log  n)^{2}(\log\log n)^{D}}{32 (\log n)^2}\Big\} \\
    & \ge L_{k', j'-1}+1.
\end{align*}
As $L_{k', j'-1}$ is an upper bound on the number of vertices embedded into $W_{k', j'-1}$, by induction there is some vertex in $W_{k', j'-1}$ which is an available candidate for $v_t$, a contradiction to the definition of $j'$. This concludes the proof that for all $t \in [t^*-1]$, each $L_{k, j}$ is an upper bound on $|\im\psi_t \cap W_{k, j}|$. Hence there is an available candidate for $t^*$ in $W_{k^*}$, and our embedding strategy never fails. 
\end{proof}

\section{Proof of Lemma \ref{lem:pseudorandom}} \label{sec:lemmas}

In this section, we prove Lemma \ref{lem:pseudorandom}. We first show that if we have some well-behaved multi-set $\cB$, then the link of each vertex of the block model contains some $B \in \cB$ with reasonable probability. Lemma \ref{lem:pseudorandom} will then follow via an application of the Chernoff bound to a fixed sub-block and well-behaved multi-sets, followed by a union bound over all possible sub-blocks and well-behaved multi-sets.

\begin{lemma}\label{lem:cand_for_some}
    Let $D, r \ge 2$ and let $n$ be sufficiently large and $t \in [n]$. Let $\Gamma(n, D, r)$ be the random block model as in Definition \ref{def:RB}. Let $\cB$ be a well-behaved multi-set with $|\cB| = t$. Fix $1 \le k \le N$ and any $v \in W_k \setminus V(\cB)$. Let $\cE$ be the event that there is some $B \in \cB$ such that $B \se L(v)$. Then 
    \begin{equation*}
        \PP(\cE) \ge \min\{1/2, t(\log  n)^{2}(\log\log n)^{D}n^{D^{1-k}-1}\}.
    \end{equation*}
\end{lemma}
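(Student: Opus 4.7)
The plan is to apply a second-moment argument to $X=\sum_{B\in\cB} X_B$, where $X_B=\mathbb{1}[B\subseteq L(v)]$, so that $\cE=\{X\geq 1\}$. Concretely, I would lower bound $\mu=\EE X$, upper bound $\EE X^2$ via a careful exploitation of (WB2), and conclude with Paley-Zygmund $\PP(X\geq 1)\geq(\EE X)^2/\EE X^2$.

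For the first moment, since $v\notin V(\cB)$, for each $B\in\cB$ the $|B|$ potential edges $\{A\cup\{v\}:A\in B\}$ are all distinct, and their indicators are mutually independent by the construction of $\Gamma(n,r,D)$. Hence $\PP(B\subseteq L(v))=\prod_{A\in B}p_{A\cup\{v\}}$ with $p_{A\cup\{v\}}=\min\{1,\,p^*\Delta_k\prod_i\Delta_i^{\pi_i(A)}\}$, using $v\in W_k$. Substituting the value of $p^*$ and using $\Delta_1=n^{1/D}$ yields $(p^*\Delta_k)^D=C_0(D,r)(\log n)^2(\log\log n)^{(r+1)D}n^{D^{1-k}-1}$. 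Since every $\Delta_i\geq\Delta_N\geq 3$ by Proposition~\ref{modelprops}~\eqref{props1}, the cross-block product $\prod_i\Delta_i^{\pi_i(A)}\geq 1$, and a brief case analysis handles those $A$ for which the formula exceeds $1$ (in which case $p_{A\cup\{v\}}=1$ is deterministic, hence favourable). We thus obtain $\PP(B\subseteq L(v))\geq C_0(p^*\Delta_k)^D$ for each $B$, giving $\mu\geq C_0' t(\log n)^2(\log\log n)^{(r+1)D}n^{D^{1-k}-1}$, which exceeds the claimed target by a factor of $(\log\log n)^{rD}$.

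For the second moment, write $\EE X^2=\mu+\sum_{B\neq B'}\PP(B\cup B'\subseteq L(v))$. For disjoint pairs $B\cap B'=\emptyset$ the corresponding edges of $\Gamma$ are disjoint, so these terms sum to at most $\mu^2$. For overlapping pairs (the correlation sum $\Delta$), I would decompose by first fixing $B$, then a shared $A^*\in B$, then $B'\neq B$ containing $A^*$. By (WB2), any vertex $u$ of $A^*$ in block $W_{k'}$ lies in at most $\Delta_{k'-1}=\Delta_{k'}^D$ sets $V(B_i)\in\cB$, which bounds the number of $B'$ containing $A^*$. Trading these multiplicity bounds block-by-block against the probability factors $\Delta_i^{\pi_i(A^*)}$ in $p_{A^*\cup\{v\}}$ yields $\Delta\leq C_1\mu\cdot(\log\log n)^{rD}$. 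Paley-Zygmund then gives $\PP(\cE)\geq\min\{1/2,\,\mu/(2C_1(\log\log n)^{rD})\}$, which combined with the first-moment bound proves the lemma.

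The main obstacle is this last estimate. The slack $(\log\log n)^{rD}$ in the first moment must be precisely absorbed by the overlapping-pair contribution, with no polynomial factor of $n$ left over. The crucial identity $\Delta_{k-1}=\Delta_k^D$ matches the multiplicity bound coming from (WB2) to the $D$-th power of a probability factor, so the per-vertex degree restriction in $\cB$ can be folded in without polynomial loss. Getting the $(\log\log n)^{rD}$ constant right also requires careful bookkeeping over which vertex of $A^*$ is used to invoke (WB2), especially when $A^*$ spans multiple blocks.
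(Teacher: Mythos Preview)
Your overall strategy---second moment via Paley--Zygmund on $X=\sum_{B\in\cB}X_B$---matches the paper, and your first-moment lower bound is correct. The gap is in the second-moment estimate: the claim $\Delta\le C_1\mu(\log\log n)^{rD}$ is false in general. Take $\cB$ to be the multi-set consisting of $m$ copies of a single one-element set $\{A^*\}$ with $A^*\in\binom{W_2}{r-1}$; by (WB2) one may take $m=\Delta_1=n^{1/D}$, and this $\cB$ is well-behaved. Then all the $X_B$ coincide with the indicator of $\{A^*\cup\{v\}\in E(\Gamma)\}$, so $\mu=m\,p_{A^*\cup\{v\}}$, every off-diagonal pair overlaps, and $\Delta=m(m-1)\,p_{A^*\cup\{v\}}$, giving $\Delta/\mu=m-1=n^{1/D}-1$ rather than $(\log\log n)^{O(1)}$. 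The underlying obstruction is that your ``trade'' needs $D$ copies of $\Delta_{k'}$ to cancel one $\Delta_{k'-1}=\Delta_{k'}^D$, but $p_{A^*\cup\{v\}}$ appears only once in $\PP(B\cup B'\subseteq L(v))$ and contributes only a single such factor.

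The paper repairs this by first pigeonholing $\cB$ onto a single \emph{block intersection pattern} $\bo'$, passing to $\cB'\subseteq\cB$ with $t'\ge t/(2\log\log n)^{(r-1)D}$; this pigeonhole is the sole source of the $(\log\log n)$ loss. Once all $B\in\cB'$ share the same pattern, every $\PP(B\subseteq L(v))$ equals a common value $q$, so $\EE X=t'q$, and after bounding the overlap sum by $(r-1)D\,\Delta_{\ell_*-1}\,\PP(A_*\setminus B_*\subseteq L(v))$ the Paley--Zygmund ratio becomes
\[
\frac{t'\prod_{b\in A_*\cap B_*}p_{v\cup b}}{2(r-1)D\,\Delta_{\ell_*-1}},
\]
where $\ell_*$ is the largest block index meeting $V(A_*\cap B_*)$. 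The product in the numerator has at most $D$ terms, each less than $1$ and each at least $p^*\Delta_k\Delta_{\ell_*}$; hence it is at least $(p^*\Delta_k\Delta_{\ell_*})^D=(p^*\Delta_k)^D\Delta_{\ell_*-1}$, and the $\Delta_{\ell_*-1}$ cancels. So the identity $\Delta_{\ell-1}=\Delta_\ell^D$ is indeed the crux, but it is applied to the $D$-th power of an edge probability extracted from the whole intersection $A_*\cap B_*$, and obtaining that $D$-th power is exactly what the common-pattern restriction enables.
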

\begin{proof}
Throughout this proof, for each $b \se V(\Gamma)$ and $v \in V(\Gamma)$ we will slightly abuse notation and write $v \cup b$ to denote $\{v\} \cup b$.

If there is some $B \in \cB$ such that $\PP(B \se L(v) ) = 1$ then $\PP(\cE) = 1$ and we are done, so we assume $\PP(B \se L(v) ) < 1$ for all $B \in \cB$. We have that for each $B \in \cB$, the probability that $B$ is contained in the link of $v$ is given by $\PP(B \se L(v) ) =\prod_{b \in B} p_{v \cup b}$, where the empty product is equal to one. We further assume that for each $B \in \cB$, we have $p_{v \cup b} < 1$ for all $b \in B$, as removing the sets with $p_{v \cup b} = 1$ from $B$ does not change the probability $\PP(B \se L(v))$. This means that when we write $p_{s}$ where $s=v\cup b$ for some $b\in B\in\cB$, the minimum in the formula defining this probability as in equation~\eqref{eq:prob} is $p_s=p^* \prod_{i=1}^N\Delta_i^{{\pi_i(s)}}$.

We generalise the intersection pattern $\pi$ of a set $s \se [W]$ to a \emph{block intersection pattern} $\bo(B)$ for each $B \in \cB$, given by $\bo(B) \in \{0, \hdots, (r-1)D\}^N$, where $\bo_i(B)=  \sum_{b \in B}\pi_i(b)$. We now restrict our attention to the subset  $\cB' \se \cB$ of blocks with a most common intersection pattern, $\bo'$. Suppose $|\cB'| = t'$ and let $\cE'$ be the event that there is some $B \in \cB'$ such that $B \se L(v)$. We will show
     \begin{equation} \label{eq:same_ip}
        \PP(\cE') \ge \min\Big\{\tfrac 1 2, 2^{(r-1)D}{t'}(\log n)^2(\log\log n)^{rD}n^{D^{1-k}-1}\Big\}.
    \end{equation}

To show that this implies the lemma, we give an upper bound on the number of different block intersection patterns defined by the elements of $\cB$, which will give a lower bound on $t'$. Let $B \in \cB$, where $B = \{b_1, \hdots, b_k\}$ for some $k \le D$. To determine its intersection pattern, we need for each $b_i$ a vector of $r-1$ elements of $N$ that indicates which $W_j$ the elements of $b_i$ are contained in. We can arrange these vectors as the columns of an $(r-1) \times D$ matrix, where each entry is either from $[N]$, or empty (if $|B|<D$). Matrices of this form determine all possible intersection patterns, and there are therefore at most $(N+1)^{(r-1)D}\le (2\log\log n)^{(r-1)D}$ possible intersection patterns. As $t'$ is the most common, we have $t' \ge t/(2\log \log n)^{(r-1)D}$ and~\eqref{eq:same_ip} implies 
    \begin{align*}
        \PP(\cE) \ge \PP(\cE') & \ge  \min\big\{\tfrac12, 2^{(r-1)D}{t'}(\log n)^{2}(\log\log n)^{rD}n^{D^{1-k}-1}\big\}
       \\ &\ge \min\Big\{\frac12, \frac{2^{(r-1)D}t}{(2\log\log n)^{(r-1)D}}(\log  n)^{2}(\log\log n)^{rD}n^{D^{1-k}-1}\Big\}\\
        &= \min\big\{\tfrac12, t(\log n)^2 (\log\log n)^{D}n^{D^{1-k}-1}\big\}.
    \end{align*}
    
    It only remains to prove~\eqref{eq:same_ip}. For each $B \in \cB'$, let $X_B$ be the indicator random variable for the event $B \se L(v)$ and let $X = \sum_{B \in \cB'}X_B$. Then by Chebyshev's inequality
    \begin{equation}\label{eq:Chebyshev}
        \PP(\cE') = \PP(X > 0) \ge \frac{(\EE X)^2}{\EE X^2}.
    \end{equation}
To find a lower bound for the right-hand side of~\eqref{eq:Chebyshev}, we will upper bound $\EE X^2$. To that end, we write
\begin{align}\label{eq:EX2}
        \EE X^2 = \sum_{B, A \in \cB'}\PP(X_B = 1, X_A =1) = \sum_{B\in \cB'} \Big( \PP(B \se L(v))\cdot \sum_{A\in \cB'}\PP\big(A \setminus B \se L(v)\big)\Big).
\end{align}

Fix $B \in \cB'$. We have
\begin{align}
    \sum_{A\in \cB'}\PP\big(A \setminus B \se L(v)\big) &= \sum_{\substack{A \cap  B  = \emptyset}}\PP(A \se L(v))  +\sum_{\substack{A \cap  B  \neq \emptyset}}\PP\big(A \setminus B \se L(v)\big) \nonumber\\
    & \le \EE X +\sum_{\substack{A \cap  B  \neq \emptyset}}\PP\big(A \setminus B \se L(v)\big)\label{eq:prob_sum_split}
\end{align}
We now find an upper bound for the expression on the right hand side of~\eqref{eq:prob_sum_split}. To this end, for each $A \in \cB'$ with non-empty intersection with $B$ we define a variable $\ell = \ell(B, A)$ to represent the maximal index $j \in [N]$ such that $V( B \cap  A) \cap W_j \neq \emptyset$. Recall that each block in $\cB'$ has only elements $b$ with $p_{v \cup b} < 1$, so $\bo_1' = 0$ and $\ell \ge 2$. Therefore,
\begin{align*}\label{eq:disjoint_sum}
    \sum_{A \cap B \neq \emptyset} \PP\big(A \setminus B \se L(v)\big) 
    &  %= \sum_{A \cap B \neq \emptyset}\bigg(\prod_{\substack{b \in A \setminus B }}p_{b \cup v}\bigg) 
    %= \sum_{\ell = 2}^N \Bigg(\sum_{A : \ell(B, A) = \ell} \space\small\prod_{\substack{b \in A \setminus B }}p_{b \cup v}\Bigg).
    = \sum_{\ell = 2}^N \Bigg(\sum_{A : \ell(B, A) = \ell} \space\PP\big(A \setminus B \se L(v)\big)\Bigg).
    \end{align*}
    
By definition of $\bo'$, we have that $V(B)$ contains at most $\bo'_{\ell}$ vertices in $W_\ell$. By (WB \ref{WB2}), each of these may be contained in $V(A)$ for at most $\Delta_{\ell-1}$ sets $A \in \cB$. This gives at most $\bo'_{\ell}\Delta_{\ell-1}$ sets $A$ such that $V (B \cap A) \cap W_\ell \neq \emptyset$. For each $\ell$, we let $A_\ell \in \cB$ be the block which maximises the probability $\PP\big(A \setminus B \se L(v)\big)$ over all sets $A \in B$ with $\ell(B, A) = \ell$. We conclude that
\begin{align*}
    \sum_{A \cap B \neq \emptyset} \PP\big(A \setminus B \se L(v)\big) =  \sum_{\ell = 2}^N \Bigg(\sum_{A : \ell(B, A) = \ell} \space\PP\big(A \setminus B \se L(v)\big)\Bigg)
    &  \le \sum_{\ell = 2}^N\bo'_{\ell}\Delta_{\ell-1}  \PP(A_\ell \setminus B \se L(v)).
\end{align*}

We require our expression to be independent of our choice of $B$, so we let $A_*, B_* \in \cB'$ be the blocks which maximise $\Delta_{\ell(B, A)-1} \PP\big(A \setminus B \se L(v)\big)$ over all sets $A, B \in \cB'$ and let $\ell_* = \ell(B_*, A_*)$. We also note $\sum_{\ell = 2}^N\bo'_\ell \le (r-1)D$.  We therefore have

    \begin{align*}
    \sum_{A \cap B \neq \emptyset} \PP\big(A \setminus B \se L(v)\big) 
    & \le \sum_{\ell = 2}^N\bo'_{\ell}\Delta_{\ell_*-1}  \PP\big(A_* \setminus B_* \se L(v)\big)
    \le (r-1)D\Delta_{\ell_*-1}  \PP\big(A_* \setminus B_* \se L(v)\big).
    \end{align*}
By~\eqref{eq:prob_sum_split} we have
\[\sum_{A\in \cB'}\PP\big(A \setminus B \se L(v)\big) \le \EE X + (r-1)D\Delta_{\ell_*-1}  \PP\big(A_* \setminus B_* \se L(v)\big).\]
We now substitute this into~\eqref{eq:EX2} to obtain 
\begin{align*}
    \EE X^2 & \le \sum_{B\in \cB'} \bigg( \PP(B \se L(v))\cdot \big(\EE X + (r-1)D\Delta_{\ell_*-1}  \PP\big(A_* \setminus B_* \se L(v)\big)\big)\bigg)\nonumber\\
    & = \EE X \cdot \big(\EE X + (r-1)D\Delta_{\ell_*-1}  \PP\big(A_* \setminus B_* \se L(v)\big)\big)\nonumber\\
    & \le \EE X \cdot \max\big\{2\EE X, \space 2 (r-1)D\Delta_{\ell_*-1}  \PP\big(A_* \setminus B_* \se L(v)\big)\big\}.\label{eq:EX^2_val}
    %\max\Big\{2(\EE X)^2, 2\EE X \cdot (r-1)D \Delta_{\ell_*-1}\PP\big(A_* \setminus B_* \se L(v)\big)\Big\}.
\end{align*}
If $2\EE X \ge 2 (r-1)D\Delta_{\ell_*-1}  \PP\big(A_* \setminus B_* \se L(v)\big)$, then $\PP(\cE') \ge 1/2 $ by~\eqref{eq:Chebyshev} and we are done. So we assume this is not the case. By~\eqref{eq:Chebyshev} we have 
\begin{equation}\label{eq:lb_PE}
    \PP(\cE') \ge \frac{\EE X}{2 (r-1)D\Delta_{\ell_*-1}  \PP\big(A_* \setminus B_* \se L(v)\big)}\space.
\end{equation}
As every $B \in \cB'$ has the same intersection pattern, we can write 
\begin{equation*}\label{eq:EX}
        \EE X = \sum_{B \in \cB'} \EE X_B = t'\cdot \PP\big(A_* \se L(v)\big) = t'\prod_{b \in A_*}p_{v \cup b}.
\end{equation*}
We substitute this into~\eqref{eq:lb_PE} and write $\PP\big(A_* \setminus B_* \se L(v)\big) = \prod_{b \in A_* \setminus B_*}p_{v \cup b}$ to give
\begin{align*}
    \PP(\cE') \ge \frac{ t'\prod_{b \in A_*} p_{v \cup b}} {2(r-1)D \Delta_{\ell_*-1} \prod_{b \in A_* \setminus B_*}p_{b \cup v}}
    & = \frac{t'\prod_{b \in (A_* \cap B_*)} p_{v \cup b}} {2(r-1)D \Delta_{\ell_*-1}}.
\end{align*}
We now let $b_* \in  (A_* \cap B_*)$ minimise $p_{v \cup b_*}$ and write
%\begin{align*}
    $\PP(\cE') \ge t'\frac{ p_{v \cup b_*}^D} {2(r-1)D \Delta_{\ell_*-1}} $.
%\end{align*}
Note that $\pi_{i}(b_*) \neq 0$ for some $i \le \ell_*$ by the definition of $\ell_*$, and hence $p_{v\cup b} = p^* \prod_{i=1}^N\Delta_i^{{\pi_i(v \cup b_*)}} \ge p^* \Delta_k\Delta_{\ell_*}$. We have
\begin{align*}
    \PP(\cE') &\ge\ t'\frac{(p^* \Delta_k\Delta_{\ell_*})^D} {2(r-1)D \Delta_{\ell_*-1}} \\
    & = t'\frac{(p^* \Delta_k)^D} {2(r-1)D }\\
    & \ge  2^{{(r-1)}D}t'n^{D^{1-k}-1}(\log n)^2(\log \log n)^{rD},
\end{align*}
as required. 
\end{proof}

We show that this property implies Lemma \ref{lem:pseudorandom}.

\begin{proof}[Proof of Lemma \ref{lem:pseudorandom}]
    We calculate the probability that~\eqref{eq:lem} does not hold for a fixed $\cB, k$ and $j$ via an application of the Chernoff bound, Theorem~\ref{chernoff}, and Lemma \ref{lem:cand_for_some}, followed by a union bound over all choices of $\cB, k$ and $j$.

    If $k =1$, then for every $u \in W_{k, j}$ and $B \in \cB$ we have $\PP(B \se L(u)) = 1$, hence 
    \begin{equation*}
        |\{u \in W_{k, j} : \exists B \in \cB \text{ such that } B \se L(u)\}| = |W_{k, j}|
    \end{equation*} 
    and we are done. 

    Let $k \ge 2$ and fix some $j \in [\log n]$ and $t \in [n]$ and well-behaved set $\cB$ with $|\cB| = t$. We now argue that we may assume 
    \begin{equation}\label{eq:t_cond}
        t (\log n)^2(\log \log n)^D n^{D^{1-k}-1} < 1/2.
    \end{equation} 
    Suppose we prove the lemma under this condition on $t$. If we have some $t \in [n]$ such that~\eqref{eq:t_cond} does not hold, then we may choose an integer $t' < t$ such that $1/4 < t' (\log n)^2(\log \log n)^D n^{D^{1-k}-1} < 1/2$, and arbitrarily choose a subset $\cB' \subset \cB$ with $|\cB'| = t'$. Then by~\eqref{eq:lem} applied to $\cB'$,
    
    $$|\{u \in W_{k, j} : \exists B \in \cB' \subset \cB \text{ such that } B \se L(u)\}| \ge \frac{1} {16} |W_{k, j}|$$ 
    and we are done. We proceed with the assumption that $t$ satisfies~\eqref{eq:t_cond}.

    Let $U = W_{k, j} \setminus V(\cB)$ be the set of vertices of $W_{k, j}$ disjoint from $\cB$. For each $u \in U$, let $\cE_u$ be the the event that there exists some $B \in \cB$ such that $B \se L(u)$. By Lemma \ref{lem:cand_for_some}, we can write
    \begin{align*}
    \EE|\{u \in U:  \exists B \in \cB \text{ such that } B \se L(u)\}| &= \sum_{u \in U} \PP(\cE_u) \\
    & \geByRef{eq:t_cond}  t (\log n)^2(\log \log n)^D n^{D^{1-k}-1}|U| \\
        & \ge \frac t{\log n} (\log n)^2(\log \log n)^D n^{D^{1-k}-1} 3^Drn^{1-D^{-k}} \\
        &> %\frac {3^Dr}{2}
        t(\log n)(\log \log n)^Dn^{D^{1-k}-D^{-k}},
    \end{align*}
    where the second inequality uses that $|U| \ge \frac 1 2 |W_{k, j}|$ by (WB\ref{WB3}). 
    Note that
    \begin{equation*}
        \frac{t (\log  n)^{2}(\log\log n)^{D}n^{D^{1-k}-1}/4}{t(\log n)(\log \log n)^Dn^{D^{1-k}-D^{-k}}} = \frac{ \log n}{4n^{1-D^{-k}}},
    \end{equation*}
    which tends to zero. Hence, if the inequality in~\eqref{eq:lem} does not hold, then the number of vertices in $U$ such that there is some $B \in \cB$ such that $B \se L(u)$ is less than half its expectation, which by the Chernoff bound in Theorem \ref{chernoff} happens with probability at most 
    \begin{align*}
        2 \exp\left(- \frac 1 {16}t(\log n)(\log \log n)^Dn^{D^{1-k}-D^{-k}}\right) 
         < 2n^{-\frac 1{16}t(\log\log n)^D}.
    \end{align*}

    For our fixed $t$, we now take a union bound over all choices of $k, j$ and $\cB$ with $|\cB| = t$. Each $B\in \cB$ can be determined by a vector of length $D$, whose entries are either from $\binom{V(\Gamma(n, r, D))}{r-1}$ or an empty symbol (if it contains less than $D$ sets). Arranging these vectors in a matrix with $t$ columns defines some $\cB$. As $|V(\Gamma(n, r, D)| \le 2 \cdot 3^Drn$, there are at most 
    \begin{equation*}
        {\binom{2\cdot3^Drn + 1}  {r-1}}^{tD} \le \left(\frac{e(2\cdot3^Drn + 1)}{r-1} \right)^{rtD} <n^{2rtD},
    \end{equation*}
    choices for $\cB$, where the first inequality is by~\eqref{eq:stirling} and the last inequality holds for large $n$. There are at most $N \le  \log \log n$ choices of $k$ and at most $\log n$ choices for $j$, so the probability that~\eqref{eq:lem} does not hold is at most 
    \begin{equation*}
        (\log \log n ) (\log n) \cdot 2n^{2rtD-\frac 1{16}t(\log\log n)^D} < n^{-2}.
    \end{equation*}
    Finally, taking a union bound over all $n$ choices of $t$ gives that the probability~\eqref{eq:lem} does not hold is $o(1)$, as required.
\end{proof}

\section{Concluding Remarks}

We prove bounds on the number of edges required in a hypergraph that is universal for the class $\cH^{(r)}(n,D)$ of $D$-degenerate graphs on~$n$ vertices for $D>1$ that are tight up to a logarithmic factor.
It would be interesting to determine if it is possible to remove this logarithmic factor. Similarly, it would be interesting to obtain analogous results for spanning universality.

We note that our upper-bound result (Theorem~\ref{thm:main}) does not apply to $1$-degenerate $r$-graphs. However, we believe with some minor adjustments our construction one could obtain an analogous bound for $D = 1$. It is not clear, if the additional logarithmic term in such a construction is needed in the case that $r>2$ as
the upper bound constructions given for the graph case in \cite{Chung_Graham_1978, kim2025} are not immediately generalisable to hypergraphs.

Another interesting direction would be to consider universality results for hypergraph of bounded density, generalising the results of~\cite{alon2024universalitygraphsboundeddensity}.

\medskip

\bibliographystyle{plainurl}
\bibliography{Citations}

\end{document}